\documentclass{article}
\usepackage{amssymb,amsmath}
\usepackage{graphicx}

\textheight 8.5in
\textwidth 6.5 in
\oddsidemargin 0in
\topmargin 0in

\def\qed{\hfill {\hbox{${\vcenter{\vbox{               
   \hrule height 0.4pt\hbox{\vrule width 0.4pt height 6pt
   \kern5pt\vrule width 0.4pt}\hrule height 0.4pt}}}$}}}

\newtheorem{theorem}{Theorem}
\newtheorem{definition}{Definition}

\newtheorem{proposition}[theorem]{Proposition}

\newtheorem{example}{Example}

\newtheorem{remark}{Remark}

\newenvironment{proof}[1][Proof]{\smallskip\noindent{\bf #1.}\quad}%
{\qed\par\medskip}

\date{}

\title{\Large \textbf{Birack Dynamical Cocycles and Homomorphism Invariants}}

\author{Sam Nelson\footnote{Email: knots@esotericka.org}\and
Emily Watterberg\footnote{Email:cewatterberg@comcast.net }}

\begin{document}
\maketitle

\begin{abstract} Biracks are algebraic structures related to knots and links.
We define a new enhancement of the birack counting invariant for 
oriented classical and virtual knots and links via algebraic structures 
called birack dynamical cocycles. The new invariants can also be understood 
in terms of partitions of the set of birack labelings of a link diagram 
determined by a homomorphism $p:X\to Y$ between finite labeling biracks.
We provide examples to show that the new invariant is stronger than
the unenhanced birack counting invariant and examine connections
with other knot and link invariants.
\end{abstract}

\medskip

\quad
\parbox{5.5in}{
\textsc{Keywords:} biracks, dynamical cocycles, birack homomorphims, 
enhancements of counting invariants
\smallskip

\textsc{2010 MSC:} 57M27, 57M25
}

\section{\large\textbf{Introduction}}\label{I}

Biracks were first introduced in \cite{FRS} as an algebraic structure with
axioms motivated by the framed Reidemeister moves. Biquandles, a special case 
of biracks, were developed in more detail in \cite{KR} and in later work such 
as \cite{FJK}. In \cite{N2} the \textit{integral birack counting invariant} 
$\Phi_X^{\mathbb{Z}}$, an integer-valued invariant of classical and 
virtual knots and links, was defined using labelings of knot and link diagrams
by finite biracks. More recent works such as \cite{BN} have defined 
\textit{enhancements} of the integral counting invariant, new invariants 
which are generally stronger but specialize to $\Phi_X^{\mathbb{Z}}$.

In this paper we define a new enhancement of the integral birack counting 
invariant using an algebraic structure called a \textit{birack dynamical 
cocycle}, analogous to rack dynamical cocycles introduced in \cite{AG}
and applied to enhancements in \cite{CNS}. We then reformulate and generalize
the new invariant in terms of birack homomorphisms. The paper is 
organized as follows. In Section \ref{B} we review the basics of biracks
and the birack counting invariant. In Section \ref{BP} we define the birack
dynamical cocycle invariant and discuss relationships with 
previously studied invariants. In Section \ref{CA} we collect
some computations and applications of the new invariant, and in Section 
\ref{Q} we finish with some open questions for future work.

\section{\large\textbf{Biracks and the Counting Invariant}}\label{B}

Recall that a \textit{framed link} can be defined combinatorially
as an equivalence class of link diagrams (projections of unions of 
simple closed curves in $\mathbb{R}^3$ onto a plane with breaks to 
indicate crossing information) under the equivalence relation
generated by the \textit{framed Reidemeister moves}:
\[\includegraphics{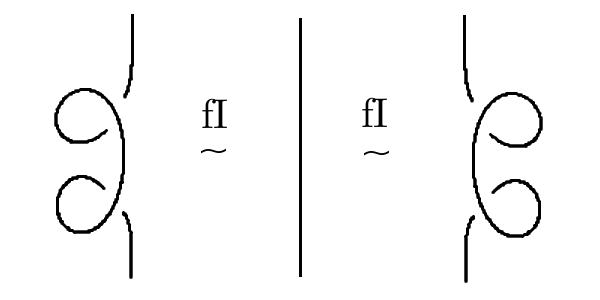} \quad \includegraphics{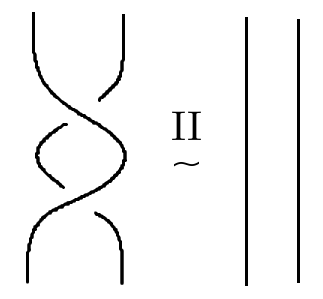}
\quad \includegraphics{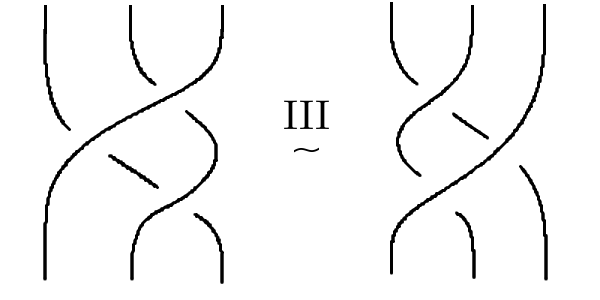}
\]
An \textit{oriented framed link} has a choice of orientation for each
component of the link, and oriented framed Reidemeister moves respect
orientation. For each component of a link, the \textit{framing number}
or \textit{writhe} of the component is the sum of crossing signs 
\[\includegraphics{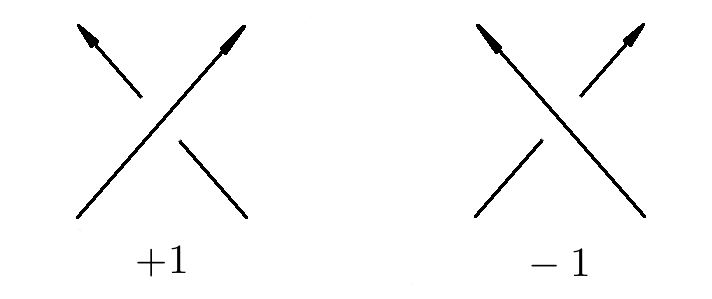}\]
at each crossing where both strands are from the component in question.
Note that framed Reidemeister moves preserve the framing numbers of each
component. An \textit{unframed link} or just a \textit{link} is an 
equivalence class of link diagrams under the equivalence relation obtained 
by replacing the framed type I move with the unframed type I move:
\[\includegraphics{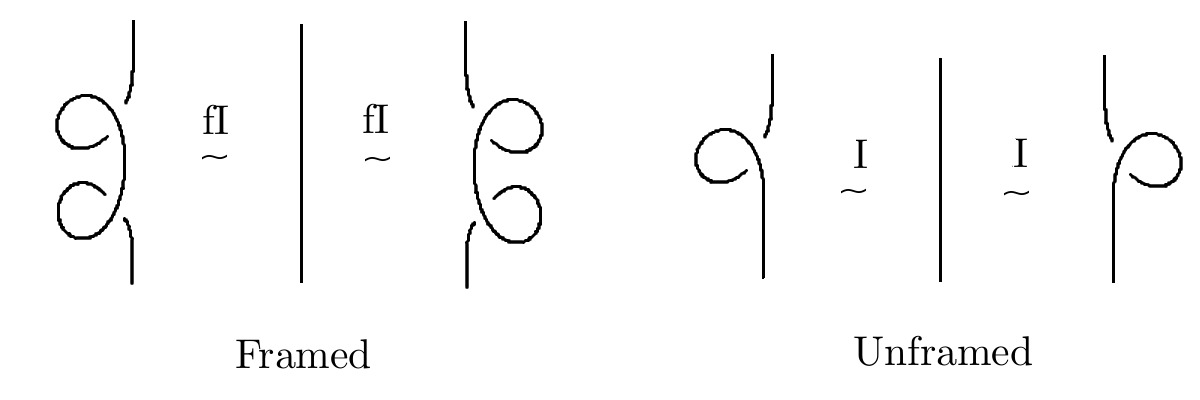}\]
A link diagram represents a union of disjoint simple closed curves in 
$\mathbb{R}^3$; each simple closed curve is a \textit{component} of the link.
A link with a single component is a \textit{knot}.

Let $X$ be a set. We would like to define an algebraic structure on 
$X$ such that \textit{labelings}, i.e. assignments of elements of $X$ to 
\textit{semiarcs} (portions of the diagram between adjacent over or 
under crossing points) in an oriented blackboard-framed link diagram $L$, 
are preserved under framed oriented
Reidemeister moves. To define such an algebraic structure, we can think
of crossings in a link diagram as determining a map 
$B:X\times X\to X\times X$ as pictured.
\[\includegraphics{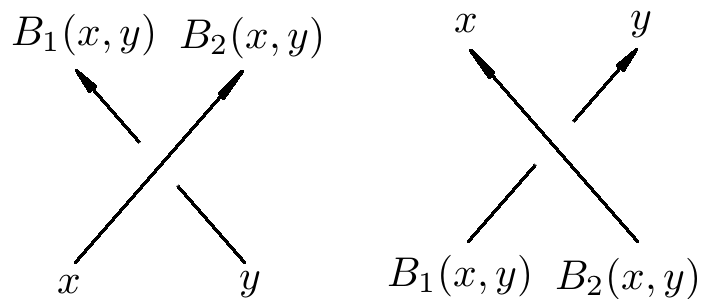}\]
Translating the oriented blackboard framed Reidemeister moves into
conditions on $B$, we obtain the following definition (see also 
\cite{FRS,KR,FJK,N2}).

\begin{definition}\textup{
Let $X$ be a set and let $\Delta:X\to X\times X$ be the diagonal map
$\Delta(x)=(x,x).$ An invertible map $B:X\times X\to X\times X$ is a 
\textit{birack map} if 
\begin{itemize}
\item[(i)] There exists a unique invertible \textit{sideways map}
$S:X\times X\to X\times X$ satisfying
\[S(B_1(x,y),x)=(B_2(x,y),y) \]
\item[(ii)] The components of the composition of the diagonal map
with the sideways map and with its inverse, $(S^{\pm 1}\Delta)_{1,2}$,
are bijections, and
\item[(iii)] $B$ satisfies the \textit{set-theoretic Yang-Baxter equation},\
\[(B\times I)(I\times B)(B\times I)=(I\times B)(B\times I)(I\times B).\]
\end{itemize}
}\end{definition}

Invertibility of $B$ and axiom (i) guarantee that labelings before and after
Reidemeister type II moves correspond bijectively.
\[\includegraphics{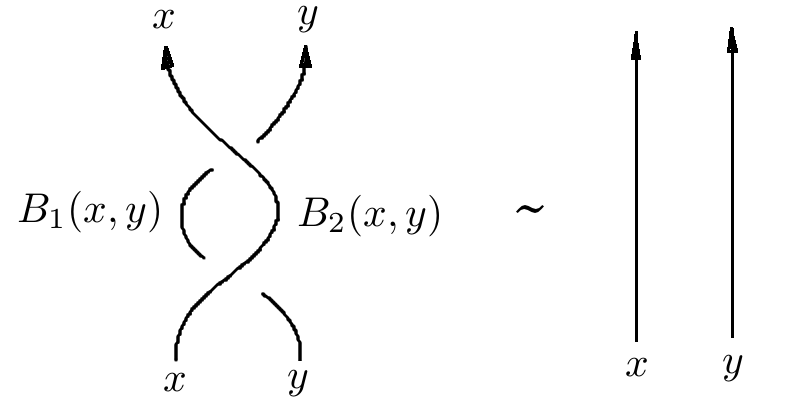}\quad \includegraphics{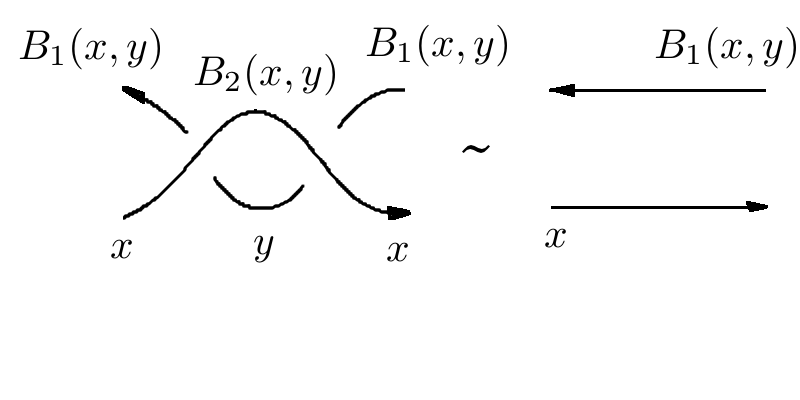}\]
These conditions can be summarized with the ``adjacent labels rule'', which
says any two adjacent labels at a crossing determine the other two labels.

Axiom (ii) guarantees that the label on the input semiarc of a kink
determines the other labels; in particular, the map taking the input
label to the output label at a positive kink, 
$\pi=(S\Delta)_1(S\Delta)_2^{-1}$, is a bijection called the \textit{kink map}.
\[\includegraphics{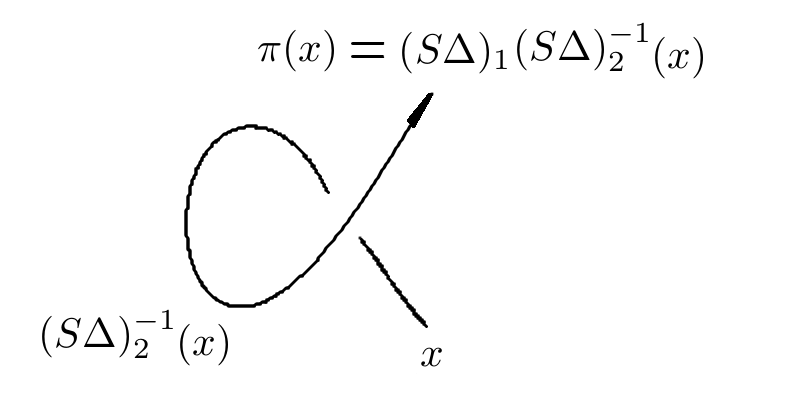}\]
This is enough to guarantee that labelings of diagrams before and after 
framed type I moves correspond bijectively. See \cite{N2} for more.
\[\includegraphics{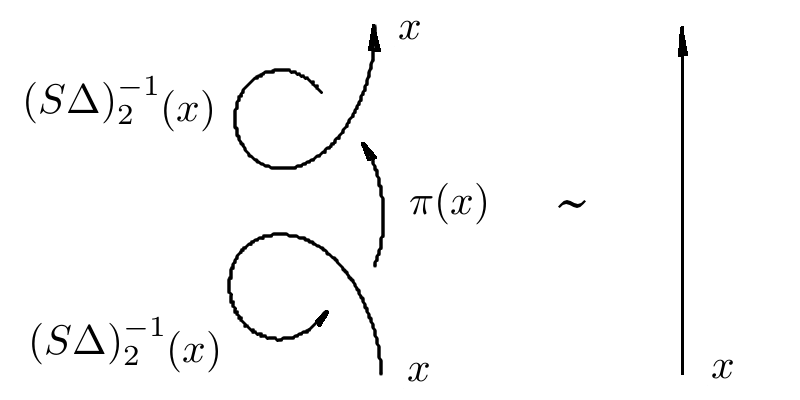}\]

Axiom (iii) guarantees that diagrams before and after Reidemeister type III
moves correspond bijectively. Note that horizontal stacking here corresponds 
to Cartesian product $\times$ and vertical stacking corresponds to function
composition.
\[(B\times I)(I\times B)(B\times I)\quad
\raisebox{-0.65in}{\includegraphics{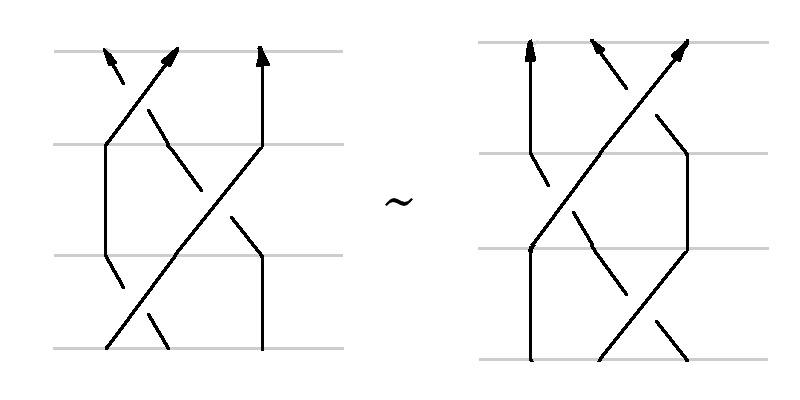}} \quad 
(I\times B)(B\times I)(I\times B)
\]

If $X$ is a finite set, then the kink map $\pi$ is an element of the symmetric
group on the elements of $X$. In particular, the \textit{birack rank} $N$
of $X$ is the exponent of $\pi$, i.e. the smallest positive integer $N$
such that $\pi^N$ is the identity map on $X$. Two link diagrams which are
related by framed oriented Reidemeister moves and \textit{$N$-phone cord moves}
have birack labelings by $X$ which are in one-to-one correspondence.
\[\includegraphics{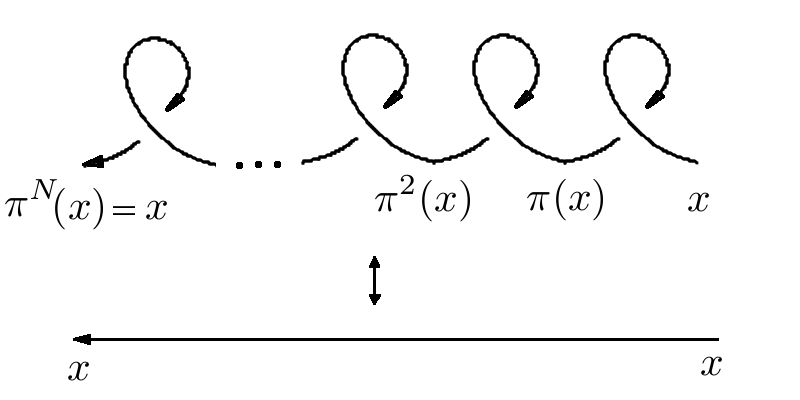}.\]
A birack of rank $N=1$ is a \textit{strong biquandle}.

Examples of birack structures include:
\begin{itemize}
\item \textit{Constant Action Biracks}. Let $X$ be a set and 
$\sigma,\tau:X\to X$ bijections such that $\sigma\tau=\tau\sigma$. Then
$B(x,y)=(\sigma(y),\tau(x))$ defines a birack map on $X$ with kink map
$\pi=\sigma\tau^{-1}$.
\item \textit{$(t,s,r)$-Biracks}. Let $X$ be a module over the ring
$\tilde\Lambda=\mathbb{Z}[t^{\pm 1},s,r^{\pm 1}]/(s^2-(1-tr)s)$. 
Then $B(x,y)=(sx+ty,rx)$ is a birack map on $X$ with kink map
$\pi(x)=(tr+s)x$.
\item \textit{Fundamental Birack of an oriented framed link}.
Given an oriented framed link diagram $L$, let $Y$ be a set of
generators corresponding to semiarcs in $L$. Then the set of 
\textit{birack words} determined by $L$ includes elements of $Y$
and expressions of the form $B_{1,2}^{\pm 1}(x,y)$ and $S_{1,2}^{\pm}(x,y)$
where $x,y$ are birack words in $L$. Then the \textit{fundamental birack}
of $L$, denoted $BR(L)$, is the set of equivalence classes of birack words
under the equivalence relation determined by the birack axioms and the
crossing relations in $L$. See 
\cite{FJK} or \cite{KR} for more.
\end{itemize}

As in other categories, we have the standard notions of homomorphisms and
sub-objects. More precisely, let $X,Y$ be sets with birack maps $B,B'$
and let $Z\subset X$. Then
\begin{itemize}
\item A \textit{homomorphism of biracks} is a map $f:X\to Y$ such that
\[B'(f\times f)=(f\times f)B,\] and
\item $Z$ is a \textit{subbirack} of $X$ if the restriction $B_{Z\times Z}$
of $B$ to $Z\times Z\subset X\times X$ is a birack map.
\end{itemize}

If $X=\{x_1,\dots,x_n\}$ is a finite birack, we can specify a birack 
structure on $X$ with a pair of operation matrices expressing the maps
$B_{1}(x,y)$ and $B_{2}(x,y)$ as binary operations. More precisely, a 
\textit{birack matrix} $[M|M']$ has two $n\times n$ block matrices $M$, 
$M'$ such that 
\[M_{i,j}=k \quad \mathrm{and} \quad M'_{i,j}=l\]
where $x_k=B_1(x_j,x_i)$ and $x_l=B_2(x_i,x_j)$. Note the reversed order
of the input components of $M$; the notation is chosen so that the row
number and output are on the same strand.

\begin{example}\label{ex1}
\textup{Consider the $(t,s,r)$-birack structure on $X=\mathbb{Z}_4$ 
given by
$B(x,y)=(2x+3y,3x)$. $X$ has birack matrix 
\[M_X=\left[\begin{array}{rrrr|rrrr}
1 & 3 & 1 & 3 & 1 & 1 & 1 & 1 \\
4 & 2 & 4 & 2 & 4 & 4 & 4 & 4 \\
3 & 1 & 3 & 1 & 3 & 3 & 3 & 3 \\
2 & 4 & 2 & 4 & 2 & 2 & 2 & 2 \\
\end{array}\right]\] 
where $x_1=0, x_2=1, x_3=2$ and $x_4=3\in \mathbb{Z}_4$.}
\end{example}

If $L$ is an oriented framed link and $X$ is a finite birack, then
a homomorphism $f:BR(L)\to X$ assigns an element $f(g)$ of $X$ to each
generator $g$ of $BR(L)$, so such a homomorphism determines a labeling
of the semiarcs of $L$ with elements of $X$. Conversely, such a labeling 
defines a homomorphism if and only if the crossing relations are
satisfied at every crossing. In particular, the set $\mathrm{Hom}(BR(L),X)$
is a finite set; its cardinality $|\mathrm{Hom}(BR(L),X)|=\Phi^B_X(L)$
is a computable invariant of framed oriented links known as the 
\textit{basic birack counting invariant.}

Each component of a  $c$-component link can have any integer as its
framing number; thus, for any $c$-component link, there is a 
$\mathbb{Z}^c$-lattice of framed links and a corresponding 
$\mathbb{Z}^c$-lattice of basic counting invariant values $\Phi^B_X(L)$. 
If a birack $X$ has rank $N$ and
$L$ and $L'$ are related by $N$-phone cord moves, then every $X$-labeling
of $L$ corresponds to a unique $X$-labeling of $L'$ and vice-versa;
thus the $\mathbb{Z}^c$-lattice of basic counting invariant values is tiled
by a tile of side length $N$. Summing the numbers of birack labelings over 
a complete tile of framing vectors mod $N$ then yields an invariant of
unframed links known as the \textit{integral birack counting invariant},
\[\Phi_X^{\mathbb{Z}}(L)=\sum_{\vec{w}\in (\mathbb{Z}_N)^c} 
|\mathrm{Hom}(BR(L,\vec{w}),X)|\]
where $(L,\vec{w})$ is a diagram of $L$ with framing vector $\vec{w}$.

\begin{example}\label{ex1.5}\textup{
Let $X$ be the birack in example \ref{ex1}, i.e. the $(t,s,r)$-birack on
$\mathbb{Z}_4$ with $t=r=3$ and $s=2$. 
We have $(tr+s)=(3)(3)+2=3$ and $3^2=1$ in $\mathbb{Z}_4$, so $X$ has birack 
rank $N=2$. To compute the counting invariant for a link $L$, then, we need 
to count $X$-labelings of a set of diagrams of $L$ with every 
combination of even and odd writhes on the components of $L$. For example, 
the link $L4a1$ has a total of $\Phi_X^{\mathbb{Z}}(L4a1)=36$ labelings
by $X$ over a complete tile of framings mod 2, while the Hopf link $L2a1$
has a total of $\Phi_X^{\mathbb{Z}}(L2a1)=20$.
\[\begin{array}{cc}
\includegraphics{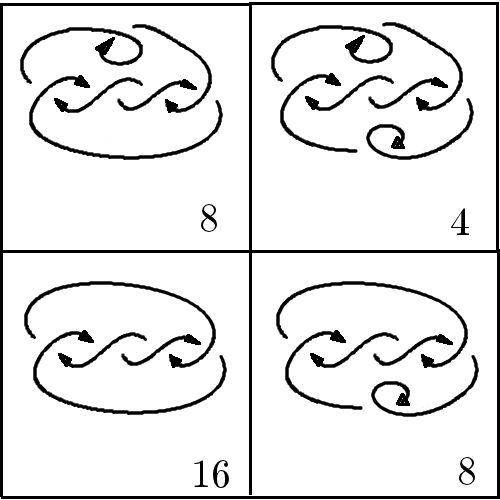} & \includegraphics{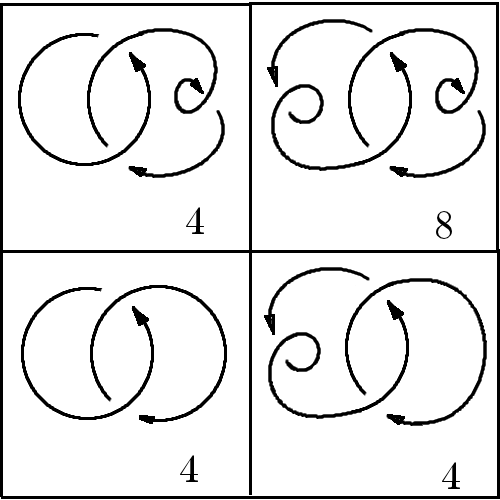} \\
L4a1 & L2a1 \\
\end{array}\]
}
\end{example}

For a given finite birack $X$ and link $L$, if $\Phi^{Z}$
If two oriented links have the same value of $\Phi_X^{\mathbb{Z}}$, it could 
be that the links are the same, or it might be a coincidence. An
\textit{enhancement} of the counting invariant is a stronger invariant which
determines $\Phi_X^{\mathbb{Z}}$ but contains additional information about 
$L$ which can distinguish links which coincidentally have the same
$\Phi_X^{\mathbb{Z}}$ value.

\begin{example}\textup{
In \cite{N2}, an enhancement is defined by keeping track of which framings
contribute which labelings to $\Phi_X^{\mathbb{Z}}$. More precisely, let us
abbreviate $q_1^{w_1}q_2^{w_2}\dots q_c^{w_c}$ as $q^{(w_1,w_2,\dots,w_c)}$. Then
the \textit{writhe enhanced counting invariant} is 
\[\Phi_X^{W}(L)=\sum_{\vec{w}\in (\mathbb{Z}_N)^c} 
|\mathrm{Hom}(FB(L,\vec{w}),X)|q^{\vec{w}}.\]
Then for the birack $X$ in example \ref{ex1}, we have 
$\Phi_X^W(L4a1)=16+8q_1+8q_2+4q_1q_2$ and
$\Phi_X^W(L2a1)=4+4q_1+4q_2+8q_1q_2$. Note that $\Phi_X^W$ evaluated at
$u=1$ yields $\Phi_X^{\mathbb{Z}}$. 
}\end{example}

\section{\large\textbf{Birack Dynamical Cocycles and Birack Homomorphisms}}\label{BP}

In this section we define birack dynamical cocycles and 
introduce a new enhancement of the birack counting invariant.

\begin{definition}\textup{
Let $X$ be a birack of rank $N$, $S$ a set with identity map $I$, 
and consider a set $D$ of maps $D_{x,y}:S\times S\to S\times S$. 
Such a collection of maps defines a \textit{birack dynamical cocycle} if
\begin{list}{}{}
\item[(i)] Every $D_{x,y}$ is invertible,
\item[(ii)] For each $D_{x,y}$ there is a unique invertible
map $S_{x,y}:S\times S\to S\times S$ such that for all $a,b\in S$, we have
\[S((D_{x,y})_1(a,b),a)=((D_{x,y})_2(a,b),b),\]
\item[(iii)] The maps $(S_{x,y}^{\pm 1}\Delta)_{1,2}:S\to S$ are bijections
\item[(iv)] For every  $x,y,z\in X$, the \textit{$X$-labeled Yang Baxter 
equations} 
\[(I\times D_{B_2(x,B_1(y,z)),B_2(y,z)})(D_{x,B_1(y,z)}\times I)(I\times D_{y,z})
=(D_{B_1(x,y),B_1(B_2(x,y),z)}\times I)(I\times D_{B_2(x,y),z})(D_{x,y}\times I)
\]
are satisfied, and
\item[(v)]  For every $x\in X$, we have
\[\pi_{\alpha(\pi^Nx),\pi^N x}\dots\pi_{\alpha(\pi x),\pi x} \pi_{\alpha(x),x} =I\]
where $\alpha_{x,y}=(S_{x,y}\Delta)_2^{-1}$ and 
$\pi_{x,y}=(S_{x,y}\Delta)_1\alpha_{x,y}$ 
\end{list}
}\end{definition}

The birack dynamical cocycle axioms come from the $X$-labeled framed 
oriented Reidemeister moves and the $N$-phone cord move
where we think of the elements of $S$ as ``beads'' on each semiarc.
The operation $D_{x,y}$ is then the result of pushing the beads through a 
crossing with input birack labels $x,y$:
\[\raisebox{-0.5in}{\includegraphics{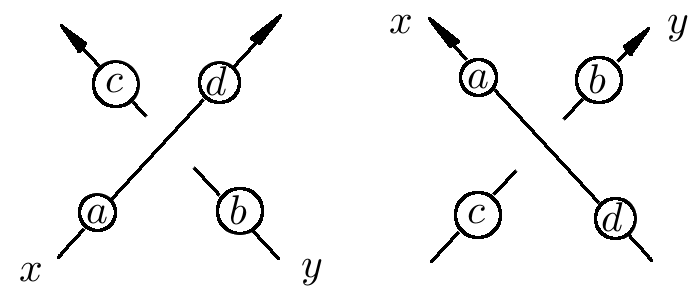}} \quad D_{x,y}(a,b)=(c,d)\]
For a fixed $X$-labeling $f$ of an oriented link diagram $L$, let
$\mathcal{L}_S(f)$ be the number of assignments of elements of $S$ to semiarcs
in $L$ such that the above pictured condition is satisfied at every crossing.
The birack dynamical cocycle axioms are chosen so that for every $S$-labeling
of an $X$-labeled oriented link diagram before a Reidemeister or $N$-phone cord move, there is a unique corresponding $S$-labeling after the move. That is, 
$|\mathcal{L}_S(f)|$ is an invariant of $X$-labeled oriented framed isotopy mod $N$.

Analogously to \cite{CNS}, we define the \textit{birack dynamical cocycle 
invariant} by counting the bead labelings as a signature for each birack 
labeling:

\begin{definition}
\textup{Let $X$ be a finite birack of rank $N$, $D$ a birack dynamical 
cocycle and $L$ an oriented link of $c$ components. The \textit{birack 
dynamical cocycle enhanced multiset} is the multiset
\[\Phi_X^{D,M}(L)=\{|\mathcal{L}_S(f)| \ :\ f\in\mathrm{Hom}(BR(L,\vec{w}),X),
\vec{w}\in(\mathbb{Z}_N)^c\}\]
and the \textit{birack dynamical cocycle enhanced polynomial} is
\[\Phi_X^{D}(L)=\sum_{\vec{w}\in(\mathbb{Z}_N)^c}\left(\sum_{
f\in\mathrm{Hom}(BR(L,\vec{w}),X)}
 u^{|\mathcal{L}_S(f)|}\right)\]
where $(L,\vec{w})$ is a diagram of $L$ with writhe vector $\vec{w}$.
}
\end{definition}

By construction, we have

\begin{theorem}
If $L$ and $L'$ are ambient isotopic oriented links, then
$\Phi_X^{D,M}(L)=\Phi_X^{D,M}(L')$ and 
$\Phi_X^{D}(L)=\Phi_X^{D}(L')$.
\end{theorem}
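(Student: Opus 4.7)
The plan is to reduce the statement to verifying that both enhanced invariants are preserved, term by term, under each of the elementary moves that generate unframed ambient isotopy: the framed oriented Reidemeister II, III, and framed type I moves, the $N$-phone cord move, and summation over writhe vectors in $(\mathbb{Z}_N)^c$ to absorb unframed type I moves.

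First, for a fixed framed diagram $(L,\vec{w})$, the material of Section \ref{B} already provides a natural bijection between $X$-labelings before and after each framed Reidemeister move: birack axiom (i) handles type II, axiom (iii) handles type III, axiom (ii) handles framed type I, and the rank condition $\pi^N = I$ handles the $N$-phone cord move. I would then show that each such bijection lifts to a bijection on bead ($S$-)labelings compatible with the $X$-labeling, crossing by crossing. For Reidemeister II, invertibility of $D_{x,y}$ (cocycle axiom (i)) together with the existence and uniqueness of the sideways map $S_{x,y}$ (axiom (ii)) produce the bead bijection. For Reidemeister III, the $X$-labeled Yang-Baxter equation (axiom (iv)) asserts equality of the two compositions of bead maps around the triangle. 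For framed type I, axiom (iii) supplies bijectivity of the cocycle kink maps on beads. Finally, for the $N$-phone cord move, axiom (v) states precisely that the $N$-fold composite of cocycle kink maps along a component is the identity, mirroring $\pi^N = I$ at the bead level.

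Granted these bead bijections, one has $|\mathcal{L}_S(f)| = |\mathcal{L}_S(f')|$ for corresponding $X$-labelings $f, f'$ before and after each move, so for each fixed $\vec{w}$ the multiset $\{|\mathcal{L}_S(f)|\}_f$ is unchanged by framed II, III, and framed type I moves, and the $N$-phone cord move sends this multiset for framing $\vec{w}$ to the same multiset for framing $\vec{w} \pm N e_i$. The sum over $\vec{w} \in (\mathbb{Z}_N)^c$ then absorbs unframed type I moves, which merely permute the summands by shifting $\vec{w}$ by a unit vector modulo $N$. Assembling these move-by-move statements yields invariance of both $\Phi_X^{D,M}(L)$ and $\Phi_X^{D}(L)$ under unframed ambient isotopy.

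The main obstacle will be bookkeeping in the Reidemeister III verification, where the $X$-labels appearing as subscripts of $D$ in axiom (iv) must be matched to the correct compositions of $B_1$ and $B_2$ applied to the incoming $X$-labels on the three strands; any misaligned subscript would break the correspondence between the algebraic identity and the diagrammatic move. A similar, though more elementary, care is needed to check that the ordering of the kink maps in axiom (v) matches the order of the kinks encountered when traversing the component in the $N$-phone cord move.
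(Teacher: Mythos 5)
Your proposal is correct and follows essentially the same route as the paper, which dispatches the theorem with ``By construction'' after having already observed that the dynamical cocycle axioms are chosen precisely so that $S$-labelings correspond bijectively across each framed Reidemeister and $N$-phone cord move, with the sum over writhe vectors in $(\mathbb{Z}_N)^c$ absorbing the unframed type I move. You simply make explicit the axiom-by-move bookkeeping that the paper leaves implicit.
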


We can simplify the new enhancement with the observation that an $S$-labeling
of an $X$-labeled diagram is really a labeling by pairs in $X\times S$, and the
birack dynamical cocycle axioms are precisely the conditions required to make
$X\times S$ a birack under the map $B\rtimes D$ defined by 
\[B\rtimes D((x,a),(y,b))=((B_1(x,y),(D_{x,y})_1(a,b)),(B_2(x,y),(D_{x,y})_2(a,b))).\]
We will denote this birack structure on $X\times S$ as $X\rtimes_D S$.

\begin{example}\textup{
If $X$ and $S$ are biracks with birack maps $B$ and $C$ respectively, then 
the Cartesian product $X\times S$ has a birack map $B\times C$:
\[(B\times C)((a,x),(b,y))=((B_1(a,b),C_1(x,y)),(B_2(a,b),C_2(x,y)))\]
and the dynamical cocycle maps are given by $D_{x,y}=C$ for all $x,y\in X$. 
We will denote this birack structure simply by $X\times S$. In particular,
we can think of a dynamical cocycle as generalization of the Cartesian product
structure where the map on the $S$ components depends on the $X$ components.
}\end{example}

\begin{example}\textup{
If $X$ is a birack and $S$ has an $X$-module structure over a ring
$R$ given by a matrix $[T|S|R]$ (see \cite{BN}), then $X\times S$
has birack dynamical cocycle given by 
\[D_{x,y}(a,b)=(s_{x,y}a+t_{x,y}b,r_{x,y}a).\]
}\end{example}

In particular, the ``forgetful homomorphism'' $p:X\times S\to X$ defined
by $p(x,s)=x$ is a birack homomorphism which we may think of as a
coordinate projection map. We can thus think of the enhancement
$\Phi_{X}^D$ as starting with $X\times S$-labelings of $L$ and collecting
together the $X\times S$ labelings which project to the same $X$-labeling.
This leads us to a generalization: let $p:X\to Y$ be any birack homomorphism.
For each $Y$-labeling $f\in\mathrm{Hom}(BR(L),Y)$, we obtain a signature 
$\sigma(f)=|\{g:BR(L)\to X\ : \ pg=f\}|$ for $f$ by counting the number of 
birack homomorphisms $g:BR(L)\to X$ such that the diagram commutes.
\[\includegraphics{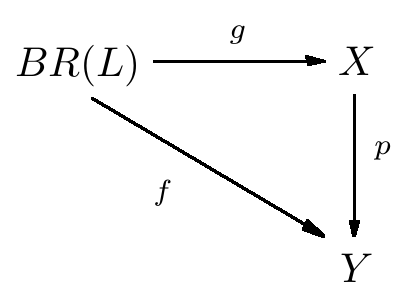}\]
The multiset of such signatures is an enhancement of $\Phi_Y^{\mathbb{Z}}$.
Note that not every labeling of $L$ by $Y$ necessarily factors through $p$; 
some $\sigma(f)$s could be zero. Such labelings contribute $u^0=1$ to
$\Phi_p(L)$, so the constant term in $\Phi_p$ counts the number of 
$Y$-labelings of $L$ which do not factor through $p$.

\begin{definition}
\textup{Let $X$ be a finite birack of rank $N$, $p:X\to Y$ a birack 
homomorphism and $L$ an oriented link of $c$ components. For each
$f\in\mathrm{Hom}(BR(L,\vec{w}),Y)$, let 
\[\sigma(f)=|\{g\in\mathrm{Hom}(BR(L,\vec{w}),X))\ |\ pg =f\}|,\] the number
of $X$-labelings of $(L,\vec{w})$ that project to $f$. 
The \textit{birack homomorphism enhanced multiset} is the multiset
\[\Phi_p^{M}(L)=\{ \sigma(f)\ |\
f\in\mathrm{Hom}(BR(L,\vec{w}),Y), \vec{w}\in(\mathbb{Z}_N)^c\}\]
and the \textit{birack homomorphism enhanced polynomial} is
\[\Phi_p(L)=
\sum_{\vec{w}\in(\mathbb{Z}_N)^c}
\left(\sum_{f\in\mathrm{Hom}(FB(L,\vec{w},Y))} 
u^{\sigma(f)}\right).\]
}\end{definition}

If $X=Y\rtimes_D Z$ and $p:X\to Y$ is projection onto the first
factor, then $\phi_Y^{D,M}=\Phi_p^{M}$ and $\phi_Y^{D}=\Phi_p$.
In the case of the Cartesian product of two biracks $X=Y\times Z$, we can 
say what the $\Phi_p$ looks like:
\begin{proposition}
If $X=Y\times Z$ is a birack of rank $N$ and $p:X\to Y$ is the coordinate
projection homomorphism, then we have 
\[\Phi_p(L)=\sum_{\vec{w}\in (\mathbb{Z}_N)^c} 
\Phi_Y^B(L,\vec{w})u^{\Phi_Z^B(L,\vec{w})}.\]
\end{proposition}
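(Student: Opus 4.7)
The plan is to exploit the universal property of the Cartesian product of biracks, which implies that homomorphisms from $BR(L,\vec{w})$ into $Y\times Z$ are in bijection with pairs of homomorphisms into $Y$ and $Z$ separately.

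First I would fix a framing vector $\vec{w}\in(\mathbb{Z}_N)^c$ and analyze the inner sum $\sum_{f\in\mathrm{Hom}(BR(L,\vec{w}),Y)} u^{\sigma(f)}$. The key step is to observe that a map $g:BR(L,\vec{w})\to Y\times Z$ is a birack homomorphism if and only if its two coordinate maps $g_1=\mathrm{pr}_Y\circ g$ and $g_2=\mathrm{pr}_Z\circ g$ are birack homomorphisms into $Y$ and $Z$ respectively. This follows directly from the definition of the product birack map $B\times C$: both the crossing relations and the identification via the fundamental birack relations separate componentwise, so $g$ preserves the birack structure iff both $g_1$ and $g_2$ do.

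Given this decomposition, for a fixed $f\in\mathrm{Hom}(BR(L,\vec{w}),Y)$, the condition $pg=f$ forces $g_1=f$, leaving $g_2\in\mathrm{Hom}(BR(L,\vec{w}),Z)$ entirely free. Hence
\[
\sigma(f)=|\mathrm{Hom}(BR(L,\vec{w}),Z)|=\Phi_Z^B(L,\vec{w}),
\]
which notably does \emph{not} depend on $f$. Consequently the inner sum collapses to
\[
\sum_{f\in\mathrm{Hom}(BR(L,\vec{w}),Y)} u^{\Phi_Z^B(L,\vec{w})}
=\Phi_Y^B(L,\vec{w})\,u^{\Phi_Z^B(L,\vec{w})}.
\]

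Summing over $\vec{w}\in(\mathbb{Z}_N)^c$ then yields the claimed formula. There is no real obstacle here; the entire argument rests on the fact that a Cartesian product of biracks is a product in the categorical sense, so this is essentially a bookkeeping exercise. The only place that might warrant a careful sentence is verifying that the pair-of-homomorphisms correspondence is a genuine bijection at the level of labelings of $(L,\vec{w})$, which amounts to checking that each crossing relation in $X=Y\times Z$ splits as the conjunction of the corresponding relations in $Y$ and $Z$.
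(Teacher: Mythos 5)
Your proposal is correct and follows essentially the same route as the paper's proof: the paper's one-line argument that ``the $Y$- and $Z$-labelings are independent'' is exactly your observation that a homomorphism into $Y\times Z$ splits componentwise, so that $\sigma(f)=\Phi_Z^B(L,\vec{w})$ for every $Y$-labeling $f$. You simply spell out the bookkeeping that the paper leaves implicit.
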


\begin{proof}
We simply note that for each writhe vector $\vec{w}\in (\mathbb{Z}_N)^c$, 
the $Y$- and $Z$-labelings are independent. Hence, for each $Y$-labeling
of a diagram $L$ with framing vector $\vec{w}$, there are $\Phi_Z^B(L)$
$Z$-labelings.
\end{proof}



Next, a few straightforward observations.

\begin{proposition}
If $p:X\to Y$ is an isomorphism, then $\Phi_p(L)=\Phi_X^{\mathbb{Z}}(L)u$.
\end{proposition}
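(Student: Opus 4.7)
The plan is to unwind the definition of $\Phi_p$ and show that when $p$ is bijective, each signature $\sigma(f)$ collapses to $1$, so the polynomial factors as a multiple of $u$.

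First I would fix a framing vector $\vec{w}\in(\mathbb{Z}_N)^c$ and observe that post-composition with $p$ gives a map
\[ p_*: \mathrm{Hom}(BR(L,\vec{w}),X)\to \mathrm{Hom}(BR(L,\vec{w}),Y),\qquad g\mapsto pg. \]
Since $p$ is a birack isomorphism, $p^{-1}$ is also a birack homomorphism, so $p_*$ has a two-sided inverse given by post-composition with $p^{-1}$. Hence $p_*$ is a bijection. In particular, for every $f\in\mathrm{Hom}(BR(L,\vec{w}),Y)$ the set $\{g : pg=f\}=\{p^{-1}f\}$ is a singleton, so $\sigma(f)=1$.

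Next I would note that an isomorphism $p:X\to Y$ sends the kink map of $X$ to that of $Y$ by conjugation, so $X$ and $Y$ have the same birack rank $N$; thus the index set $(\mathbb{Z}_N)^c$ in the definitions of $\Phi_p$ and $\Phi_X^{\mathbb{Z}}$ agrees. Combining this with $\sigma(f)=1$ and the bijection $p_*$, the inner sum becomes
\[ \sum_{f\in\mathrm{Hom}(BR(L,\vec{w}),Y)} u^{\sigma(f)} = |\mathrm{Hom}(BR(L,\vec{w}),Y)|\,u = |\mathrm{Hom}(BR(L,\vec{w}),X)|\,u. \]
Summing over $\vec{w}\in(\mathbb{Z}_N)^c$ and pulling out the common factor of $u$ then yields $\Phi_p(L)=\Phi_X^{\mathbb{Z}}(L)\,u$, as required.

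There is essentially no obstacle here; the only point that requires a line of justification (rather than being immediate from the definitions) is that the ranks of $X$ and $Y$ coincide so that the two sums run over the same lattice of writhe vectors. Everything else is formal from the fact that an isomorphism induces a bijection on $\mathrm{Hom}(BR(L,\vec{w}),-)$.
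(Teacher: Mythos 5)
Your proof is correct; the paper states this proposition without proof as one of its ``straightforward observations,'' and your argument (post-composition with $p$ is a bijection on Hom sets, so every $\sigma(f)=1$ and the inner sum collapses to $|\mathrm{Hom}(BR(L,\vec{w}),X)|\,u$) is exactly the intended justification. The remark about the ranks agreeing is a sensible extra check, though since the definition of $\Phi_p$ already takes $N$ to be the rank of $X$, the two sums run over the same lattice by fiat.
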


\begin{proposition}
If $p:X\to Y$ is a constant map, then $\Phi_p(L)=u^{\Phi_X^{\mathbb{Z}}(L)}$.
\end{proposition}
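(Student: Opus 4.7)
The plan is to directly unpack the definition of $\Phi_p(L)$ under the hypothesis that $p$ is constant, and read off what $\sigma(f)$ must be for each $Y$-labeling. Let $y_0 \in Y$ denote the common image value, so $p(x) = y_0$ for every $x \in X$. The proof should rest on three observations, handled in sequence.

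First, I would verify that the constant labeling $f_{y_0}$ assigning $y_0$ to every semiarc is a valid element of $\mathrm{Hom}(BR(L,\vec{w}),Y)$ for every writhe vector $\vec{w} \in (\mathbb{Z}_N)^c$. Since $p$ is a birack homomorphism, $\{y_0\}$ is a singleton subbirack of $Y$: from $B'(p(x),p(x)) = (p\times p)B(x,x)$ we get $B'(y_0,y_0) = (y_0,y_0)$, and $p$ intertwines the kink maps, giving $\pi_Y(y_0) = p(\pi_X(x)) = y_0$. Hence $f_{y_0}$ satisfies all the crossing relations and is preserved under kinks, regardless of framing.

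Second, I would compute $\sigma(f)$ for each $Y$-labeling $f$. For any $g \in \mathrm{Hom}(BR(L,\vec{w}),X)$, the composite $p \circ g$ is the constant labeling $f_{y_0}$, since $p$ is constant. Therefore $\sigma(f) = 0$ for every $Y$-labeling $f \neq f_{y_0}$, and every such $f$ contributes $u^0 = 1$ to the polynomial, while $\sigma(f_{y_0}) = |\mathrm{Hom}(BR(L,\vec{w}),X)|$ since \emph{every} $X$-labeling $g$ projects to $f_{y_0}$.

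Finally, I would assemble the pieces via the defining formula for $\Phi_p$. The only labeling with a nontrivial $u$-exponent is $f_{y_0}$, and summing the contributions $u^{|\mathrm{Hom}(BR(L,\vec{w}),X)|}$ across $\vec{w} \in (\mathbb{Z}_N)^c$ collects exponents totalling $\sum_{\vec{w}} |\mathrm{Hom}(BR(L,\vec{w}),X)| = \Phi_X^{\mathbb{Z}}(L)$, yielding $u^{\Phi_X^{\mathbb{Z}}(L)}$. The main obstacle — and the step I expect to require the most care — is the accounting for the $u^0$ contributions from the non-factoring $Y$-labelings and for the aggregation of the $f_{y_0}$ terms across different framings, which requires using the singleton structure of the image $\{y_0\}$ to identify the constant labelings coherently.
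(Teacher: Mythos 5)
Your argument is correct and careful up through the computation of the signatures: for each fixed framing vector $\vec{w}$, every $g\in\mathrm{Hom}(BR(L,\vec{w}),X)$ projects to the constant labeling $f_{y_0}$, so $\sigma(f_{y_0})=|\mathrm{Hom}(BR(L,\vec{w}),X)|$ and $\sigma(f)=0$ for every other $Y$-labeling. The gap is in the final aggregation step. The defining formula for $\Phi_p$ is an \emph{outer sum} over $\vec{w}\in(\mathbb{Z}_N)^c$, and the constant $Y$-labeling of $(L,\vec{w})$ is a different labeling (of a different diagram) for each $\vec{w}$; what your computation actually yields is
\[
\Phi_p(L)\;=\;\sum_{\vec{w}\in(\mathbb{Z}_N)^c} u^{|\mathrm{Hom}(BR(L,\vec{w}),X)|}\;+\;C,
\]
where $C$ counts the $Y$-labelings that do not factor through $p$. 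Writing this as $u^{\sum_{\vec{w}}|\mathrm{Hom}(BR(L,\vec{w}),X)|}=u^{\Phi_X^{\mathbb{Z}}(L)}$ silently replaces a sum of monomials by a single monomial whose exponent is the sum of the exponents; that identity is a multiplication, not an addition, and it fails whenever the outer sum has more than one term. Concretely, taking $X$ to be the rank-$2$ birack of Example~\ref{ex1} and $Y$ a one-point birack, the Hopf link gives $\sum_{\vec{w}}u^{n_{\vec{w}}}=3u^4+u^8$, whereas $u^{\Phi_X^{\mathbb{Z}}(L2a1)}=u^{20}$.

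Relatedly, you correctly observe that the non-factoring $Y$-labelings each contribute $u^0=1$, but these terms then disappear from your final answer; unless every $Y$-labeling of every framed diagram is the constant $y_0$-labeling, the constant term $C$ is nonzero and must appear. The stated identity therefore only holds under additional hypotheses you would need to invoke explicitly --- essentially $N=1$ (so the outer sum is a single term) together with the condition that every $Y$-labeling factors through $p$ (e.g.\ $Y=\{y_0\}$). The paper offers no proof of this proposition to compare against, but as written your derivation does not establish it; either the extra hypotheses must be added or the conclusion must be replaced by the displayed sum above.
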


\begin{remark}\textup{
As with many combinatorially-defined link invariants, $\Phi_p$ extends
to virtual knots and links by ignoring the virtual crossings, i.e. by
not dividing semiarcs at crossings. See \cite{K}
for more about virtual knots and links.}
\end{remark}

We end this section with a connection to recent work on $(t,s)$-racks.
Let $\ddot\Lambda=\mathbb{Z}[t^{\pm 1},s]/(s^2-(1-t)s)$.
A \textit{$(t,s)$-rack} is a birack structure on a $\ddot\Lambda$-module 
$X$ given by
\[B(x,y)=(ty+sx,x).\]
In \cite{CN}, an invariant $\Phi^s_{X}$ was defined by 
collecting together the $X$-labelings of a link diagram which project
to the same $sX$ labeling under the map $s:X\to sX$.
We note that $\Phi_X^s$ is the same as $\Phi_s$ in our present terminology.

\section{\large\textbf{Computations and Applications}}\label{CA}

In this section we collect a few computations, examples and applications 
of the new invariant. We begin with an explicit example of computing
$\Phi_p$.

\begin{example}\textup{
Let $X$ and $Y$ be the biracks with matrices
\[M_X=\left[\begin{array}{rrr|rrr}
2 & 2 & 1 & 1 & 1 & 1 \\
1 & 1 & 2 & 2 & 2 & 2 \\
3 & 3 & 3 & 3 & 3 & 3 \\
\end{array}\right]\quad \mathrm{and} \quad 
M_Y=\left[\begin{array}{rr|rr}
a & a & a & a \\
b & b & b & b \\
\end{array}\right]\]
and let $p:X\to Y$ be given by $p(1)=p(2)=a, p(3)=b$.}

\textup{
The kink map for $X$ is the permutation $\pi=(12)$, so $X$ has birack rank 
$N=2$; hence we must find all labelings of the semiarcs in diagrams of 
$L$ with writhe vectors $(0,0)$, $(0,1)$, $(1,0)$ and $(1,1)$ mod 2 which 
satisfy the labeling condition
\[\includegraphics{sn-ew-1.png}.\]
One can do this by choosing diagrams with the required framings mod 2 and
simply listing all possible assignments of elements of
$X$ to semiarcs in $L$, keeping only those which satisfy the condition;
our code uses an algorithm which propagates labels through partially-labeled
diagrams. Our \texttt{python} code is available at 
\texttt{www.esotericka.org}.}

\textup{ Let us compute $\Phi_p$ for the Hopf link $L2a1$. There are 16 
$X$-labelings of $L2a1$ over a tile of framings mod $2$, as depicted.
\[\includegraphics{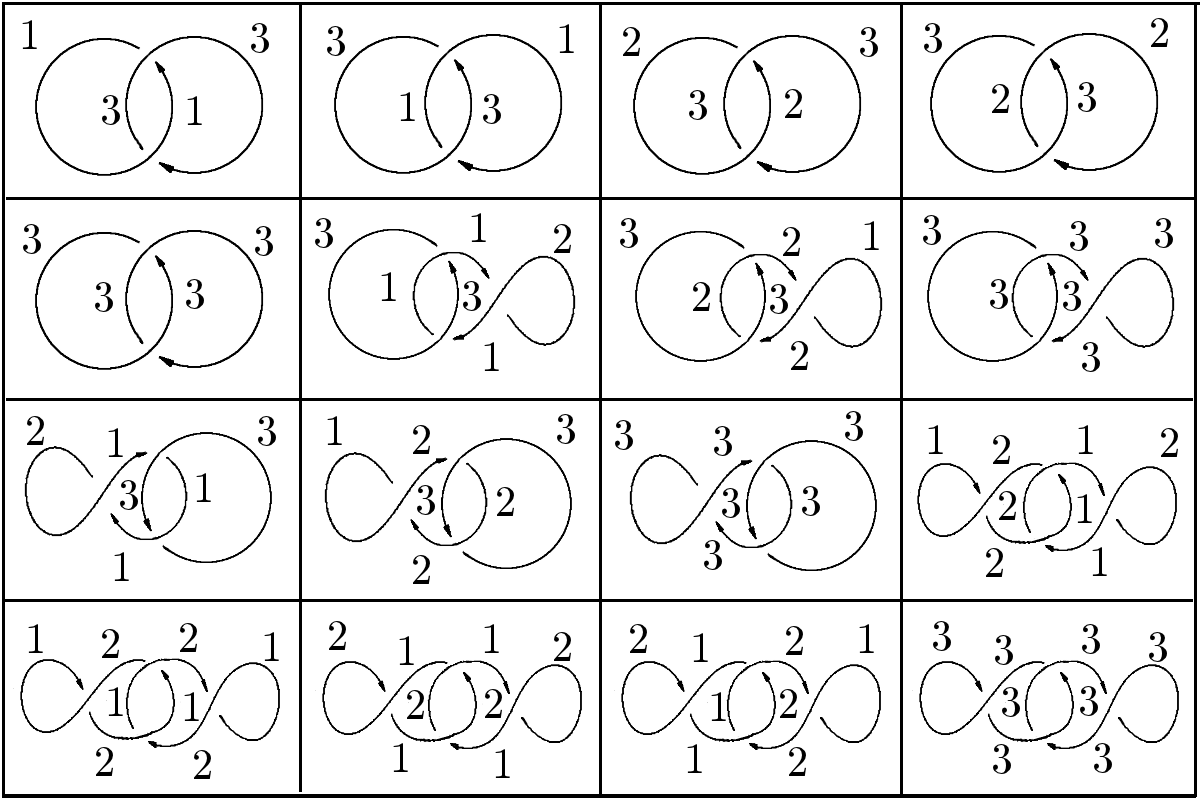}\]
These project to the pictured $Y$-labelings.
\[\includegraphics{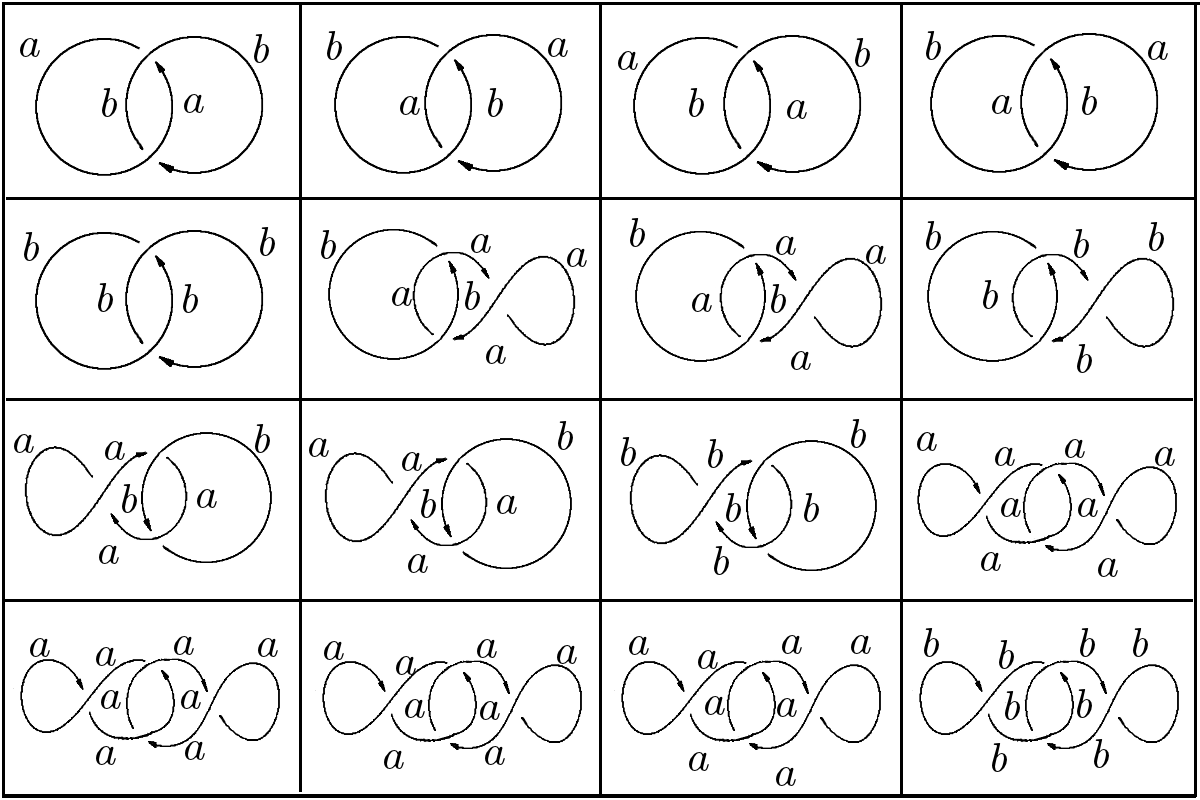}\]
with contributions $u+2u^2$ from the $(0,0)$-framing, $u+u^2$ from the
$(0,1)$ framing,  $u+u^2$ from the $(1,0)$ framing and $u+u^4$ from the 
$(1,1)$ framing to yield $\Phi_p(L2a1)=4u+4u^2+u^4.$
}\end{example}

Let $p:X\to Y$ be a birack projection. The invariant $\Phi_p$ can be 
understood as an enhancement of the birack counting invariant with respect
to $X$, thinking of collecting together $X$-labelings which project to the
same $Y$-labeling; alternatively, we can understand $\Phi_p$ as an enhancement
of the counting invariant with respect to $Y$, where for each $Y$-labeling
we find how many $X$-labelings project to it via $p$. The next examples  
show that $\Phi_p$ is a proper enhancement, i.e. $\Phi_p$ is not determined 
by either $\Phi_X^{\mathbb{Z}}$ or $\Phi_Y^{\mathbb{Z}}$.

\begin{example}\textup{The biracks $X$ and $Y$ with listed matrices have
projection maps including $p:X\to Y$ below:
\[M_X=\left[\begin{array}{rrrrrr|rrrrrr}
2 & 2 & 2 & 1 & 1 & 1 & 2 & 1 & 3 & 1 & 1 & 1 \\ 
3 & 3 & 3 & 2 & 2 & 2 & 1 & 3 & 2 & 2 & 2 & 2 \\
1 & 1 & 1 & 3 & 3 & 3 & 3 & 2 & 1 & 3 & 3 & 3 \\
5 & 5 & 5 & 5 & 5 & 5 & 4 & 4 & 4 & 4 & 4 & 4 \\
6 & 6 & 6 & 6 & 6 & 6 & 5 & 5 & 5 & 5 & 5 & 5 \\
4 & 4 & 4 & 4 & 4 & 4 & 6 & 6 & 6 & 6 & 6 & 6 \\
\end{array}\right],\quad 
M_Y=\left[\begin{array}{rr|rr}
a & a & a & a \\
b & b & b & b \\
\end{array}\right]
\]
\[p(1)=p(2)=p(3)=a, \ p(4)=p(5)=p(6)=b.\]
The trefoil $3_1$ and the figure eight $4_1$ are not distinguished by
the counting invariant $\Phi_Y^{\mathbb{Z}}(3_1)=\Phi_Y^{\mathbb{Z}}(4_1)=2$, 
but the enhanced invariant
$\Phi_p(3_1)=u^3+3u^9\ne \Phi_p(4_1)=4u^3$ detects the difference.
}\end{example}

\begin{example}\textup{The biracks $X$ and $Y$ with listed matrices have
projection maps including $p:X\to Y$ below:
\[M_X=\left[\begin{array}{rrrrr|rrrrr}
2 & 5 & 4 & 2 & 5 & 5 & 5 & 4 & 5 & 5  \\ 
1 & 4 & 5 & 1 & 4 & 1 & 1 & 5 & 1 & 1  \\
3 & 3 & 3 & 3 & 3 & 3 & 3 & 3 & 3 & 3  \\
5 & 2 & 1 & 5 & 2 & 2 & 2 & 1 & 2 & 2  \\
4 & 1 & 2 & 4 & 1 & 4 & 4 & 2 & 4 & 4  \\
\end{array}\right],\quad 
M_Y=\left[\begin{array}{rr|rr}
a & a & a & a \\
b & b & b & b \\
\end{array}\right]
\]
\[p(1)=p(2)=p(4)=p(5)=a,\ p(3)=b.\]
The Hopf link $L2a1$ and the $(4,2)$-torus link $L4a1$ are not 
distinguished by the counting invariant $\Phi_X^{\mathbb{Z}}(L2a1)
=\Phi_X^{\mathbb{Z}}(L4a1)=20$, but the enhanced invariant
$\Phi_p(L2a1)=4u+4u^4+2u^8\ne \Phi_p(L4a1)=4u+4u^4+u^{16}$ detects 
the difference.
}\end{example}

In our final example we compare $\Phi_p$ values on certain virtual knots
to demonstrate that $\Phi_p$ is not determined by the Jones polynomial
or the generalized Alexander polynomial.

\begin{example}\textup{
The virtual knot $3.7$ has Jones polynomial $J(3.7)=1$, the same
as the unknot; $3.7$ also has generalized Alexander polynomial
$(s - 1)(s + 1)(t - 1)(t + 1)(st - 1)$, the same as the virtual knot
$4.47$. However, the biracks $X,Y$ with homomorphism $p$
\[M_x=\left[\begin{array}{cccccc|cccccc}
3 & 2 & 1 & 1 & 1 & 1 & 2 & 2 & 2 & 1 & 1 & 1 \\
1 & 3 & 2 & 2 & 2 & 2 & 1 & 1 & 1 & 2 & 2 & 2 \\
2 & 1 & 3 & 3 & 3 & 3 & 3 & 3 & 3 & 3 & 3 & 3 \\
6 & 4 & 5 & 5 & 5 & 5 & 5 & 4 & 6 & 4 & 4 & 4 \\
4 & 5 & 6 & 6 & 6 & 6 & 6 & 5 & 4 & 5 & 5 & 5 \\
5 & 6 & 4 & 4 & 4 & 4 & 4 & 6 & 5 & 6 & 6 & 6
\end{array}\right],
\quad M_y=\left[\begin{array}{cc|cc}
a & a & a & a \\
b & b & b & b \\
\end{array}
\right]\]
\[ p(1)=p(2)=p(3)=a, p(4)=p(5)=p(6)=b.\] 
distinguish $3.7$ from the unknot and from $4.47$ with 
$\Phi_p(3.7)=3u^9+u^3$ while $\Phi_p(\mathrm{Unknot})=\Phi_p(4.47)=4u^3$. Hence,
$\Phi_p$ can distinguish knots with the same Jones and generalized Alexander
polynomials.
}\end{example}

\section{\large\textbf{Questions}}\label{Q}

We end with a few open questions for future research.

What is the relationship between birack projection invariants and birack 
cocycle invariants? Indeed, what is the role of birack dynamical cocycles 
in birack homology and cohomology? 

What structures are analogous to birack dynamical cocycles in the 
settings of virtual biracks and twisted virtual biracks? What happens when 
we add a shadow structure as in \cite{NP}?

We have used primarily small cardinality examples for speed of computation
and convenience of presentation; 
we note that even these small cardinality examples with $Y$ the trivial
birack on two elements suffice to show that $\Phi_p$ is not determined
by the integral counting invariant, the Jones  polynomial or the generalized
Alexander polynomial. Faster algorithms for computation of
$\Phi_p$ for larger biracks should allow more exploration of $\Phi_p$.

\bigskip

\noindent
\textsc{Department of Mathematical Sciences \\
Claremont McKenna College \\
850 Columbia Ave. \\
Claremont, CA 91711}


\begin{thebibliography}{10}

\bibitem{AG}{N. Andruskiewitsch and M. Gra\~{n}a.
From racks to pointed Hopf algebras. 
\textit{Adv. Math.} \textbf{178} (2003) 177-243.}


\bibitem{KA}{D. Bar-Natan (Ed.). The Knot Atlas.
\texttt{http://katlas.math.toronto.edu/wiki/Main\underline{\ }Page}.}

\bibitem{BN}{R. Bauernschmidt and S. Nelson. Birack modules and their 
link invariants. arXiv:1103.0301}

\bibitem{CN}{J. Ceniceros and S. Nelson. (t,s)-racks and their link invariants.
\textit{Int'l J. Math.} \textbf{23} (2012) 1250001-1--1250001-19.}

\bibitem{CNS}{A. Crans, S. Nelson and A. Sarkar. Enhancements of the rack counting invariant via N-reduced dynamical cocycles. arXiv:1108.4387; to appear in
\textit{New York J. Math.} }

\bibitem{FJK}{R. Fenn, M. Jordan-Santana and L. Kauffman. Biquandles 
and virtual links.  \textit{Topology Appl.}  \textbf{145}  (2004) 157-175.}

\bibitem{FRS}{R. Fenn, C. Rourke and B. Sanderson. 
Trunks and classifying spaces. \textit{Appl. Categ. Structures} \textbf{3} 
(1995) 321--356.}

\bibitem{K}{L. Kauffman. Virtual Knot Theory. \textit{European J. Combin.}
\textbf{20} (1999) 663-690.}

\bibitem{KR}{L. H. Kauffman and D. Radford. Bi-oriented quantum algebras, 
and a generalized Alexander polynomial for virtual links. 
\textit{Contemp. Math}. \textbf{318} (2003) 113-140.}

\bibitem{N2}{S. Nelson. Link invariants from finite biracks.$B_{1,2}(x,y)$
arXiv:1002.3842.}

\bibitem{NP}{S. Nelson and K. Pelland. Birack shadow modules and their link 
invariants. arXiv:1106.0336}


\end{thebibliography}
\end{document}